\theoremstyle{definition} 
\newtheorem{definition}{Definition}[section]
\newtheorem{notation}{Notation}[section]
\theoremstyle{plain} 
\newtheorem{proposition}[definition]{Proposition}
\newtheorem{lemma}[definition]{Lemma}
\newtheorem{observation}[definition]{Observation}
\theoremstyle{remark} 
\newtheorem{remark}[definition]{Remark}
\newtheorem*{Reader's guide}{Reader's guide}
\thanks{The author is partially supported by ANR ``GeoDyM''}
\title[A combinatorial move]{A combinatorial move on the set of Jenkins-Strebel differentials}
\author{Corentin Boissy}
\address{Corentin Boissy\\
Aix Marseille Université, CNRS, Centrale Marseille, I2M, UMR 7373, 13453 Marseille France}
\email{corentin.boissy@univ-amu.fr}
\subjclass[2000]{Primary: 37E05. Secondary: 37D40}
\keywords{Interval exchange maps, Rauzy induction, Abelian differentials, Moduli spaces, Teichmüller flow}
\date{\today}
\begin{document}
\maketitle

\begin{abstract} 
We describe an elementary combinatorial move on the set of quadratic differentials with a horizontal one cylinder decomposition. Computer experiment suggests that the  corresponding equivalent classes are in one-to-one correspondence with the connected component of the strata. 
\end{abstract}

\section{Introduction}
A nonzero quadratic differential with at most simple poles on a compact Riemann surface naturally defines a flat metric with conical singularities on this surface. Geometry and dynamics on such flat surfaces, in relation to geometry and dynamics on the corresponding moduli space of Abelian differentials is a very rich topic and has been widely studied in the last 30 years. It is related to interval exchange transformations, billards in polygons, Teichmüller dynamics. 

One important case is when the quadratic differential is the square of a holomorphic one-form (Abelian differential). It was first proven by Veech \cite{Veech90} that the strata of Abelian differentials are not connected, by using a combinatorial description, usually called \emph{extended Rauzy classes}.
The connected components of the moduli space of Abelian differentials were described by Kontsevich and Zorich in \cite{KoZo}. They showed that each stratum has up to three connected component, which are described by two invariants: hyperellipticity and parity of spin structure, that arise under some conditions on the set of zeroes.
 Later, Lanneau has described the connected components of the moduli space of quadratic differentials. In this case, the nonconnected strata either contain a hyperelliptic  connected component (which is an infinite family of strata), or belong to a finite family of ``exceptionnal strata''.

The exceptionnal strata $\mathcal{Q}(-1,9), \mathcal{Q}(-1,3,6), \mathcal{Q}(-1,3,3,3)$ and $\mathcal{Q}(12)$ where initially proved non connected by the computation by Zorich of the corresponding extended Rauzy classes. More recently, Möller and Chen \cite{MC} discovered few more exceptionnal strata: $\mathcal{Q}(3,9)$, $\mathcal{Q}(6,6)$, $\mathcal{Q}(3,3,6)$ and $\mathcal{Q}(3,3,3,3)$, and gave a proof of the non-connectedness of all exceptionnal strata, by using techniques of algebraic geometry.

\section{Classes of cylindrical permutations}
\subsection{Case of Abelian differentials}
Consider a finite alphabet $\mathcal{A}$ and a vector $\zeta\in \mathbb{R}^{\mathcal{A}}$, whose entries have positive real part. Choose two total ordering of $\mathcal{A}$, \emph{i.e.} one-to-one maps $\pi_t, \pi_b:\mathcal{A}\to \{1,\dots ,d\}$. The combinatorial data $\pi=(\pi_t,\pi_b)$ will be usually written as an array with two lines of symbols in $\mathcal{A}$:
$$
\begin{pmatrix}
\pi_t^{-1}(1)&\dots &\pi_t^{-1}(d) \\
\pi_b^{-1}(1)& \dots &\pi_b^{-1}(d)
\end{pmatrix}
$$
Such $\pi$ will be refered to as a \emph{labeled permutation}. 

We construct a translation surface in the following way, that we will refer to as the \emph{cylindrical construction} (note that  \emph{is not} the Veech construction): consider the broken line $L_t$ on $\mathbb{C}=\mathbb R^2$ defined by concatenation of the vectors $\zeta_{\pi_t^{-1}(j)}$ (in this order) for $j=1,\dots,d$ with starting point at $(0,l)\in \mathbb{R}^2$. Similarly, we consider the broken line $L_b$  defined by concatenation of the vectors $\zeta_{\pi_b^{-1}(j)}$ (in this order) for $j=1,\dots,d$ with starting point at the origin. Then join the two starting points of $L_t,L_b$ a vertical segment and similarly with the two ending points.If $l$ is large enough, so that $L_t,L_b$ do not intersect, one gets a polygon whose sides come by pairs of a parallel segments of the same lenght. Gluing these segments by translations, one gets a translation surface $S(\pi,\zeta)$, with $\pi=(\pi_t,\pi_b)$ (see~Figure~\ref{fig:cylind}). 

\begin{figure}[htb]
\begin{tikzpicture}[scale=0.4]
\coordinate (b1) at (4,1);
\coordinate (b1m) at (-4,-1);
\coordinate (a) at (-1,-2);
\coordinate (am) at (1,2);
\coordinate (a0) at (1,-2);
\coordinate (am0) at (-1,2);
\coordinate (b2) at (4,-1);
\coordinate (b2m) at (-4,1);
\coordinate (b3) at (1.5,-1); 
\path (a)++(b1) coordinate (b1a);
\path (am)++(b1m) coordinate (b1am);
\path (a)++(b2) coordinate (b2a);
\path (am)++(b2m) coordinate (b2am);

\draw [fill=gray!20] (0,0)--++(0,6) --++(4,-1) node[midway, above] {$\zeta_a$}--++(b1)  node[midway, above] {$\zeta_b$}--++(a0)  node[midway, above right] {$\zeta_c$}--++(10,2)  node[midway, above] {$\zeta_d$}--++(b2)  node[midway, above] {$\zeta_e$}--++(0,-6)--++(-4,1)  node[midway, below] {$\zeta_a$}--++(b1m)  node[midway, below] {$\zeta_b$}--++(am0) node[midway, below left] {$\zeta_c$}--++(-10,-2) node[midway, below] {$\zeta_d$}--++(b2m) node[midway, below] {$\zeta_e$}--cycle;
\end{tikzpicture}
\caption{Example of cylindrical construction} 
\label{fig:cylind}
\end{figure}
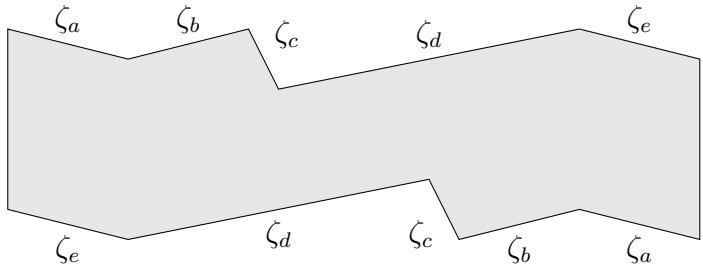

If $\zeta$ has zero imaginary part, $S(\pi,\zeta)$ is a translation surface with a one cylinder decomposition.

Now we choose any continuous data $\zeta\in \mathbb{C}^\mathcal{A}$ such that $Im(\zeta_\alpha)<0$ we deform the $S$ by changing the real part of $\zeta_\alpha$  until we have $Re(\zeta_\alpha)<0$ (see Figure~\ref{fig:ab}). Keeping the other parameter unchanged and preserving the identifications, we still have a translation surface, that belongs to the same connected component as $S$ (if $|Re(\zeta_\alpha)|$ is small enough). This surface is not any more presented as in the previous construction. However, after some suitable cutting and pasting, it is the case, with another combinatorial datum.

We assume that $\alpha$ is not the leftmost element of the top line component, nor the rightmost element of the  bottom line. Let $\beta_1\in \mathcal{A}$ be the index that correspond to the segment immediately on the left of $\alpha$ in $L_t$ and $\beta_2\in \mathcal{A}$ by the one immediately on the right of $\alpha$ in $L_b$. 
We now cut the triangle that contain the segments corresponding to $\alpha$ and ${\beta_1}$ on the top, and glue it along the segment corresponding to $\beta_1$ on the bottom. Similarly, we cut  the triangle in the bottom  correponding to $\alpha,\beta_2$ and glue it to the top along the segment corresponding to $\beta_2$. 
So the flat surface is obtained by the same  construction as before, where the combinarorial data
$$
\left(\begin{array}{l}
  \dots \beta_1 \alpha \dots \dots \beta_2\dots  \\
  \dots  \alpha \beta_2 \dots \dots  \beta_1\dots 
\end{array}\right)$$
as been replaced by 
$$
\left(\begin{array}{l}  \dots \beta_1 \dots \dots \alpha \beta_2\dots  \\
  \dots   \beta_2 \dots \dots  \beta_1 \alpha\dots 
\end{array}
\right)$$
 and where $\zeta$ as been replaced by $\zeta'$ such that 
 \begin{eqnarray*}
 \zeta'_\alpha&=&-\zeta_\alpha \\
 \zeta'_{\beta_1}&=&\zeta_{\beta_1}+\zeta_\alpha \\
 \zeta'_{\beta_2}&=&\zeta_{\beta_2}+\zeta_\alpha 
 \end{eqnarray*}


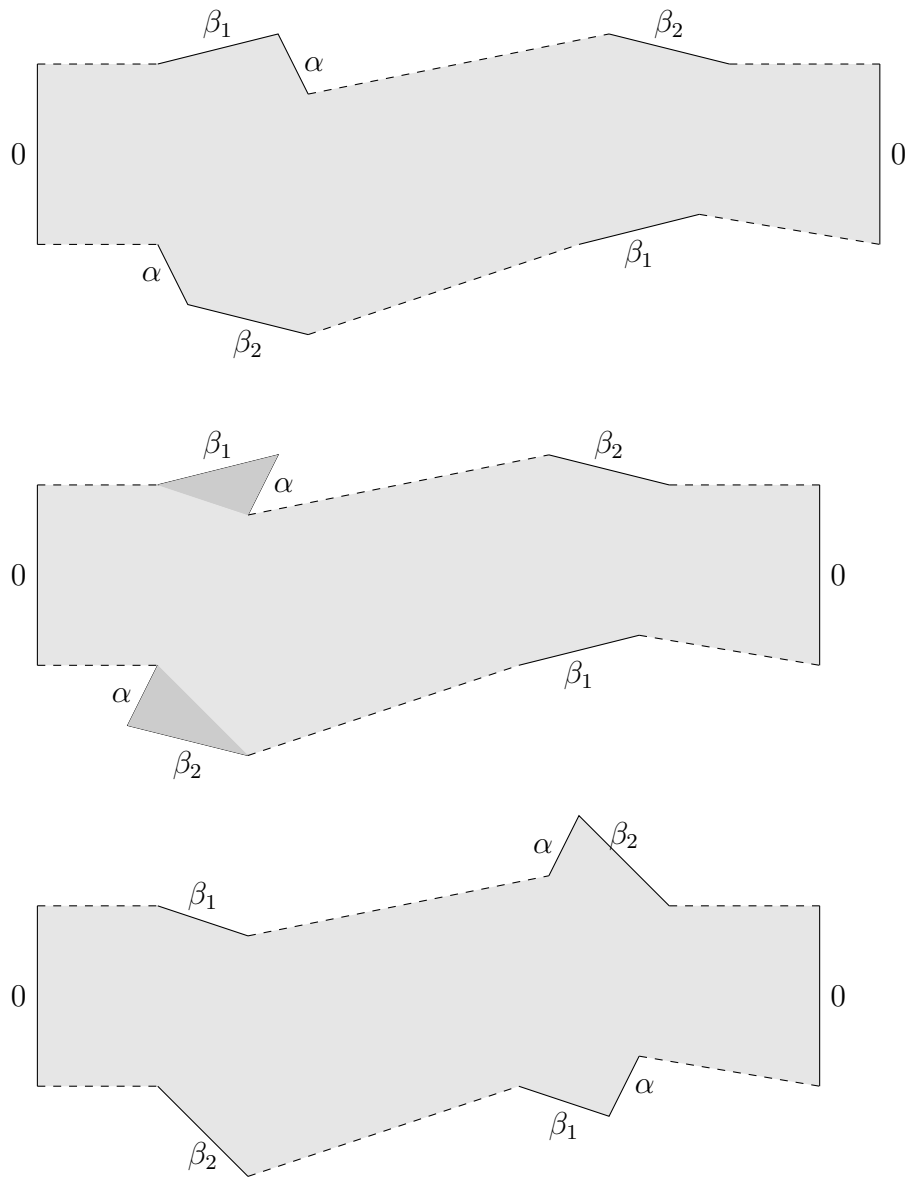
\begin{figure}[htb]
\label{move}
\begin{tikzpicture}[scale=0.4]
\coordinate (b1) at (4,1);
\coordinate (b1m) at (-4,-1);
\coordinate (a) at (-1,-2);
\coordinate (am) at (1,2);
\coordinate (a0) at (1,-2);
\coordinate (am0) at (-1,2);
\coordinate (b2) at (4,-1);
\coordinate (b2m) at (-4,1);
\coordinate (b3) at (1.5,-1); 
\path (a)++(b1) coordinate (b1a);
\path (am)++(b1m) coordinate (b1am);
\path (a)++(b2) coordinate (b2a);
\path (am)++(b2m) coordinate (b2am);

\fill [fill=gray!20] (0,14)--++(0,6)--++(4,0)--++(b1)--++(a0)--++(10,2)--++(b2)--++(5,0)--++(0,-6)--++(-6,1)--++(b1m)--++(-9,-3)--++(b2m)--++(am0)--cycle;
\draw (0,14)--++(0,6) node[midway, left] {$0$};
\draw [dashed] (0,20)--++ (4,0);
\draw (4,20)--++(b1) node[midway, above] {$\beta_1$}--++(a0) node[midway, right] {$\alpha$};
\draw [dashed] (7+2,5+14)--(17+2,7+14);
\draw (17+2,7+14)--++(b2) node[midway, above] {$\beta_2$};
\draw [dashed] (21+2,6+14)--(26+2,6+14);
\draw (26+2,6+14)--(26+2,14) node[midway, right] {$0$};
\draw [dashed] (0,14)--(4,14);
\draw (4,14)--++(a0) node[midway, left] {$\alpha$}--++(b2) node[midway, below] {$\beta_2$};
\draw [dashed] (7+2,-3+14)--(16+2,14);
\draw (16+2,14)--++ (b1) node[midway, below] {$\beta_1$};
\draw [dashed] (20+2,15)--(26+2,14);

\fill [fill=gray!20] (0,0)--++(0,6)--++(4,0)--++(b1)--++(a)--++(10,2)--++(b2)--++(5,0)--++(0,-6)--++(-6,1)--++(b1m)--++(-9,-3)--++(b2m)--++(am)--cycle;
\draw (0,0)--++(0,6) node[midway, left] {$0$};
\draw [dashed] (0,6)--++ (4,0);
\draw (4,6)--++(b1) node[midway, above] {$\beta_1$}--++(a) node[midway, right] {$\alpha$};
\draw [dashed] (7,5)--(17,7);
\draw (17,7)--++(b2) node[midway, above] {$\beta_2$};
\draw [dashed] (21,6)--(26,6);
\draw (26,6)--(26,0) node[midway, right] {$0$};
\draw [dashed] (0,0)--(4,0);
\draw (4,0)--++(a) node[midway, left] {$\alpha$}--++(b2) node[midway, below] {$\beta_2$};
\draw [dashed] (7,-3)--(16,0);
\draw (16,0)--++ (b1) node[midway, below] {$\beta_1$};
\draw [dashed] (20,1)--(26,0);

\fill [fill=gray!40] (4,6)--++(b1) --++ (a)--cycle;
\fill [fill=gray!40] (4,0)--++(a) --++ (b2)--cycle;

\fill [fill=gray!20] (0,-14)--++(0,6)--++(4,0)--++(b1a)--++(10,2)--++(am)--++(b2a)--++(5,0)--++(0,-6)--++(-6,1)--++(a)--++(b1am)--++(-9,-3)--++(b2am)--cycle;
\draw (0,0-14)--++(0,6) node[midway, left] {$0$};
\draw [dashed] (0,6-14)--++ (4,0);
\draw (4,6-14)--++(b1a) node[midway,above] {$\beta_1$};
\draw [dashed] (7,5-14)--(17,7-14);
\draw (17,7-14)--++(am) node[midway, left] {$\alpha$}--++(b2a) node[midway, above] {$\beta_2$};
\draw [dashed] (21,6-14)--(26,6-14);
\draw (26,6-14)--(26,0-14) node[midway, right] {$0$};
\draw [dashed] (0,0-14)--(4,0-14);
\draw (4,0-14)--++ (b2a) node[midway, below] {$\beta_2$};
\draw [dashed] (7,-3-14)--(16,0-14);
\draw (16,0-14)--++ (b1a) node[midway, below] {$\beta_1$}--++(am) node[midway, right] {$\alpha$};
\draw [dashed] (20,1-14)--(26,0-14);

\fill [fill=gray!40] (4,6)--++(b1) --++ (a)--cycle;
\fill [fill=gray!40] (4,0)--++(a) --++ (b2)--cycle;

\end{tikzpicture}
\caption{Changing the combinatorial datum for the cylindrical construction}
\label{fig:ab}
\end{figure}

\begin{notation}
If $\pi$ is the initial combinatorial datum we will denote the new combinatorial datum by $T_\alpha(\pi)$.
\end{notation}
One can also do an analogous procedure by starting from $\zeta_\alpha$ with positive imaginary part. One gets the transformation
$$
\left(\begin{array}{l}  \dots \beta_1 \dots \dots \alpha \beta_2\dots  \\
  \dots   \beta_2 \dots \dots  \beta_1 \alpha\dots 
\end{array} \right)
\to
\left(\begin{array}{l}
  \dots \beta_1 \alpha \dots \dots \beta_2\dots  \\
  \dots  \alpha \beta_2 \dots \dots  \beta_1\dots 
\end{array}\right)  
$$
\emph{i.e.}, one gets $T_\alpha^{-1}(\pi)$

The following proposition is clear from the definition of $T_\alpha$.
\begin{proposition}
$\pi$ and $T_\alpha(\pi)$ corresponds to translation surfaces in the in the same connected component of the same stratum, when $T_\alpha(\pi)$ is well defined.
\end{proposition}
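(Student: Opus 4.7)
The proof will directly implement the continuous deformation that is described, pictorially, in the paragraphs preceding the statement. The plan is to exhibit a continuous path of translation surfaces, all belonging to the cylindrical construction for suitable data, that interpolates between a representative of $S(\pi,\zeta)$ and a representative of $S(T_\alpha(\pi),\zeta')$. Since the stratum is a complex-analytic orbifold and its connected components are open subsets of it, any continuous path of translation surfaces stays in one connected component, which gives the conclusion.

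More concretely, I start from $\zeta\in\mathbb{R}_{>0}^{\mathcal{A}}$ so that $S(\pi,\zeta)$ is a one-cylinder surface as in Figure~\ref{fig:cylind}. The first step is to push $\zeta_\alpha$ off the real axis: set $\zeta_\alpha(s)=\zeta_\alpha-i\epsilon s$ for $s\in[0,1]$ and a small fixed $\epsilon>0$, keeping all other coordinates constant. For $\epsilon$ small (depending on the configuration of $L_t,L_b$ and on the height $l$), the two broken lines remain disjoint and the polygon stays embedded; the cylindrical construction therefore yields a continuous family of translation surfaces in a single stratum, hence in one connected component. The second step is to slide $\mathrm{Re}(\zeta_\alpha)$ from its positive initial value to a small negative value, again keeping the other coordinates fixed. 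The hypothesis that $\alpha$ is not the leftmost letter of the top line nor the rightmost letter of the bottom line guarantees that the neighbors $\beta_1,\beta_2$ exist, and this is exactly what is needed to check, by looking at Figure~\ref{fig:ab}, that the polygon remains embedded throughout the sliding (the only vertices that move are the common endpoints of $\zeta_\alpha$ with $\zeta_{\beta_1}$ on the top and with $\zeta_{\beta_2}$ on the bottom, and both displacements are small).

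At the end of the deformation we have a translation surface lying in the same connected component as $S(\pi,\zeta)$, presented by a polygon in which $\zeta_\alpha$ points slightly into the lower-left quadrant. Cutting the two shaded triangles of Figure~\ref{fig:ab} and re-gluing them along the sides labeled $\beta_1$ and $\beta_2$ (which is a rearrangement of the polygon that respects the side identifications, so does not change the translation surface) produces a polygon which is precisely the one delivered by the cylindrical construction for the combinatorial datum $T_\alpha(\pi)$ and for the length data $\zeta'$ given by $\zeta'_\alpha=-\zeta_\alpha$, $\zeta'_{\beta_i}=\zeta_{\beta_i}+\zeta_\alpha$. The main (and only) obstacle in the argument is the verification of embeddedness during the slide; this is routine once one draws the intermediate picture, and is forced by the assumption that $T_\alpha(\pi)$ is well-defined, i.e.\ that $\beta_1$ and $\beta_2$ are both present.
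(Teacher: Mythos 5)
Your proof is correct and follows essentially the same route as the paper: the paper declares the proposition ``clear from the definition of $T_\alpha$'', since $T_\alpha$ is defined precisely through the deformation of $\zeta_\alpha$ (first giving it negative imaginary part, then sliding its real part to a small negative value) followed by the cut-and-paste of the two triangles shown in Figure~\ref{fig:ab}, which is exactly the path-plus-regluing argument you spell out. Your write-up simply makes explicit the embeddedness and connectedness details that the paper leaves implicit.
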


There are some other simple transformations that preserve the connected component.
$$
U_t:\left(\begin{array}{l}   \alpha_1 \alpha_2 \dots \alpha_n \\
 \beta_1 \beta_2  \dots  \beta_n  
\end{array}\right)
\mapsto
\left(\begin{array}{l}   \alpha_2  \dots \alpha_n \alpha_1 \\
 \beta_1 \beta_2  \dots  \beta_n  
\end{array}\right)
$$

$$
U_b:\left(\begin{array}{l}   \alpha_1 \alpha_2 \dots \alpha_n \\
 \beta_1 \beta_2  \dots  \beta_n  
\end{array}\right)
\mapsto
\left(\begin{array}{l}    \alpha_1 \alpha_2 \dots \alpha_n \\
 \beta_2  \dots   \beta_n  \beta_1
\end{array}\right)
$$
and 
$$S:
\left(\begin{array}{l}   \alpha_1 \alpha_2 \dots \alpha_n \\
 \beta_1 \beta_2  \dots  \beta_n  
\end{array}\right)\mapsto
\left(\begin{array}{l}   \beta_n \dots\beta_2 \beta_1 \\
\alpha_n \dots \alpha_2 \alpha_1  
\end{array}\right)
$$

A renumbering of a labeled permutation is obtained by precomposing the maps $\pi_t,\pi_b$ with a permutation of the alphabet $\mathcal{A}$. For instance, the labeled permutation $\left(\begin{smallmatrix} b &c &d&a \\ a&d&c&b \end{smallmatrix}\right)$ is a renumbering of the labeled permutation $\left(\begin{smallmatrix} a&b &c &d \\ d&c&b&a \end{smallmatrix}\right)$. Of course, the set of labeled permutation on a alphabet $\mathcal{A}$ up to renumbering is naturally identified with the set of permutations of $\mathcal{A}$.

Computing equivalent classes for these action, one gets the following experimental result,  obtained by computer experiment\footnote{The SAGE programs can be found on the webpage of the author.}.
\begin{observation}
Two labeled permutations with up to 12 letters representing the same connected component for the cylindrical  construction in the moduli space of Abelian differentials, can be joined by a succession of moves $T_\alpha,T_\alpha^{-1}, U_t,U_b$, up to renumbering.
\end{observation}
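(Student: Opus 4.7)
The observation is a finite computational statement (for each $d \leq 12$), so the natural route is algorithmic verification, and that is what I would carry out.

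The plan has four steps. First, for each $d \in \{2,\ldots,12\}$, enumerate the $d!$ permutations of $\{1,\ldots,d\}$, each representing a labeled permutation class modulo renumbering. Second, for each such $\pi$, compute the connected component of the stratum containing $S(\pi,\zeta)$: the stratum itself is read off from the combinatorics, since each conical singularity corresponds to a cycle of an explicit permutation built from $\pi_t$ and $\pi_b$, whose length determines the cone angle. Then compute the Kontsevich--Zorich invariants: a combinatorial test for whether $\pi$ yields a surface in the hyperelliptic component (the hyperelliptic components admit an explicit combinatorial model one can match against), and, in the applicable strata, the Arf-invariant spin parity computed via a symplectic basis of cycles drawn from the horizontal sides and suitable vertical arcs.

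Third, implement the moves $T_\alpha$, $T_\alpha^{-1}$, $U_t$, $U_b$ on labeled permutations and compute the orbits of the group they generate by breadth-first search, working modulo renumbering by canonicalizing each permutation to a lexicographically minimal representative. Fourth, compare the two partitions: verify that two permutations lie in the same orbit iff they share the same connected-component label. The implication same orbit $\Rightarrow$ same component is the preceding Proposition (together with the analogous easy remark for $U_t,U_b$), so only the reverse direction needs actual checking.

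The main obstacle is computational scale. For $d=12$ there are already about $5\times 10^8$ permutations; enumeration, storage, and orbit computation must all be memory-efficient, e.g.\ by encoding each permutation as an integer index (Lehmer code) and using a compact union--find. The natural symmetries ($U_t$, $U_b$, $S$, and renumbering) substantially reduce the state space one needs to explore. A subtler but smaller issue is implementing the spin-parity calculation combinatorially without error; this can be cross-validated against known counts of extended Rauzy classes in small strata where those counts are established in the literature.
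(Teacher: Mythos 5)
Your proposal is essentially the paper's own approach: the statement is explicitly presented as an experimental result verified by a SAGE computation, namely enumerating permutations, computing the connected component via the Kontsevich--Zorich invariants (stratum, hyperellipticity, spin parity), computing orbits under $T_\alpha, T_\alpha^{-1}, U_t, U_b$ modulo renumbering, and checking the two partitions coincide. One small caution: in the Abelian case the observation does not include the move $S$, so you may not quotient the state space by $S$ when computing orbits unless you first check that $S$ permutes the orbits of the group generated by the allowed moves.
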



\subsection{Case of quadratic differentials}
One can define quadratic differentials in a similar way, by using \emph{generalized permutation}. We will define a generalized permutation of type $(l,m)$ as an array of two lines of symbols in a finite alphabet $\mathcal{A}$:
$$\pi=
\left(
\begin{array}{lll}
\alpha_1&\dots& \alpha_l \\
\beta_1&\dots& \beta_m
\end{array}
\right)$$
with the condition that each letter  appears exactly two times in the list $(\alpha_1,\dots ,\alpha_l,\beta_1,\dots \beta_m)$. 
We also assume that there exists $1\leq i,j\leq l$ such that $\alpha_i=\alpha_j$ and $1\leq i',j'\leq m$ such that $\beta_{i'}=\beta_{j'}$, \emph{i.e.} there is at least a symbol that appear twice on the top line (and therefore, not on the bottom line) and there is at least a symbol that appear twice on the bottom line (and not on the top line).

Then, choosing a vector $\zeta\in \mathbb{C}^{\mathcal{A}}$ whose entries have positive real part, and with the extra condition that
$\sum_{i=1}^l \zeta_{\alpha_i}=\sum_{j=1}^m \zeta_{\beta_j}$
one gets a flat surface be a similar construction as before. The resulting surface is a half-translation surface, \emph{i.e.} a Riemann surface endowed with a quadratic differential which is not the square of an Abelian differential. As before, if all the $\zeta_i$ have zero imaginary part, the corresponding flat surface has a one cylinder decomposition.

\begin{definition}
Let $\pi$ be a generalized permutation. A letter $\alpha\in \mathcal{A}$ is said to be of type \emph{top-bottom} if it appears both on the top and the bottom line. Similarly, it is of type \emph{top-top} (resp. \emph{bottom-bottom}) if it appears only on the top (resp. bottom).
\end{definition}

We will define an analogous move as for the case of Abelian differential. However, the corresponding combinatorial operation depends on the type of the involved letters $\alpha,\beta_1,\beta_2$ (and the definition of $\beta_1,\beta_2$ depends on the type of $\alpha$\dots ). However, by writing $\pi$ in a slightly different way, one gets a unified combinatorial operation.

\begin{definition}
Let $\pi=\left(\begin{smallmatrix} \alpha_1,\dots ,\alpha_l \\ \beta_1\dots ,\beta_m\end{smallmatrix}\right)$. We define the \emph{one line represention} of $\pi$ to be
$$\alpha_1\alpha_2\dots \alpha_l | \beta_m \beta_{m-1}\dots \beta_1 $$
\end{definition}

\begin{definition}\label{def:transfo}
Let $\pi$ be a generalized permutation, and $\alpha\in \mathcal{A}$ with the following condition:
\begin{itemize}
\item $\alpha$ is either of type top-bottom, or is not the only top-top (resp. bottom-bottom) letter  (such $\alpha$ is called \emph{regular}).
\item Each occurence of $\alpha$ admits a letter on its left in the one line representation of $\pi$ (\emph{i.e.} no occurence of $\alpha$ not the leftmost letter, or follows the symbol ``$|$'').
\end{itemize}
Let $\pi$ be written in a one line representation as (up to changing the order of the four blocks $\{\beta_1\alpha, \beta_2\alpha, \beta_1, \beta_2 \}$):
$$\dots \beta_1\alpha \dots \beta_2\alpha\dots \beta_1 \dots \beta_2 \dots $$
Then we define $T_\alpha(\pi)$ to be the generalized permutation:
$$\dots  \beta_1\dots \beta_2 \dots  \alpha\beta_1  \dots \alpha \beta_2 \dots $$
\end{definition}

One has the following proposition:
\begin{proposition}
Let $\alpha$ that satisfies the condition of Definition~\ref{def:transfo}. Then, $\pi$ and $T_\alpha(\pi)$ corresponds to the same connected component of the moduli space of quadratic differentials.
\end{proposition}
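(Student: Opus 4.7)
The plan is to imitate the argument of the Abelian case, with the three subcases for $\alpha$ (top-bottom, top-top, bottom-bottom) handled uniformly via the one-line representation. Starting from the cylindrical construction of $S(\pi,\zeta)$ with horizontal $\zeta$, I would construct a continuous path $t\mapsto (\pi_t,\zeta(t))$ in the moduli space of quadratic differentials, where $\pi_t$ is modified by cut-and-paste at finitely many moments and the intermediate surface remains nondegenerate, ending at $(T_\alpha(\pi),\zeta')$ for an explicit $\zeta'$. The deformation rotates $\zeta_\alpha$ through the lower half-plane until it becomes $-\zeta_\alpha$, while the components $\zeta_{\beta_1}$ and $\zeta_{\beta_2}$ are adjusted by $+\zeta_\alpha$ so that the closure constraint $\sum_i\zeta_{\alpha_i}=\sum_j\zeta_{\beta_j}$ is preserved throughout.

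The point of the one-line representation is that it makes the local picture uniform across the three types of $\alpha$: irrespective of whether a given occurrence of $\alpha$ sits on the top or on the bottom of the polygon, it is preceded by some letter $\beta_i$, and the corresponding pair of segments bounds a small triangle along the boundary of the polygon. As $\zeta_\alpha$ crosses the negative imaginary axis, each such triangle flips to the opposite side of the broken line; regluing it along the matching copy of $\beta_i$ produces the polygon associated with $T_\alpha(\pi)$. The hypothesis that each occurrence of $\alpha$ has a left neighbour in the one-line representation ensures that the two triangles actually exist, and the regularity hypothesis on $\alpha$ guarantees that the resulting generalized permutation still contains both a top-top and a bottom-bottom letter, so that $T_\alpha(\pi)$ is indeed a valid generalized permutation.

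The remaining verification is that the combined effect of the two local cut-and-pastes is exactly the move described in Definition~\ref{def:transfo}: every pattern $\beta_i\alpha$ in the one-line representation is replaced by $\alpha\beta_i$ at the mirror position. The case where $\alpha$ is top-bottom reduces directly to the Abelian computation already carried out in the excerpt. The cases top-top and bottom-bottom amount to applying the same local move twice on a single horizontal side of the polygon, and the bookkeeping is symmetric. The main obstacle I expect is checking that the deformation path $\zeta(t)$ can be chosen so that the polygon remains embedded and no edge collapses; this is a purely local check at the two triangles, and the left-neighbour hypothesis in Definition~\ref{def:transfo} is exactly what guarantees that sufficiently small such deformations are admissible. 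Once this is in place, the path lies entirely in a single connected component of the stratum, yielding the claimed equivalence.
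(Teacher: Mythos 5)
Your overall strategy is the same as the paper's: deform $\zeta_\alpha$ through the lower half-plane, cut the two triangles determined by the left neighbours $\beta_1,\beta_2$ in the one-line representation, reglue them along the other occurrences of $\beta_1,\beta_2$, and check that the combinatorial outcome is exactly the move of Definition~\ref{def:transfo}. The genuine gap is in how you handle the closure constraint $\sum_i\zeta_{\alpha_i}=\sum_j\zeta_{\beta_j}$, which is precisely the point where the quadratic case differs from the Abelian one. When $\alpha$ is top-bottom, no adjustment at all is needed along the path: changing $\zeta_\alpha$ alone changes both sides of the constraint equally, and the formulas $\zeta'_\alpha=-\zeta_\alpha$, $\zeta'_{\beta_i}=\zeta_{\beta_i}+\zeta_\alpha$ only describe the side vectors of the new polygon after the cut-and-paste, not a modification performed during the deformation. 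When $\alpha$ is top-top (or bottom-bottom), changing $\zeta_\alpha$ alone changes the difference of the two sides by twice the increment, and your prescription of adjusting $\zeta_{\beta_1}$ and $\zeta_{\beta_2}$ cannot restore it in general: if $\beta_1,\beta_2$ are top-bottom letters (which is perfectly possible for a top-top $\alpha$), adding anything to $\zeta_{\beta_1},\zeta_{\beta_2}$ changes both sides of the constraint by the same amount, so your intermediate data do not define a closed polygon and the path leaves the space of cylindrical constructions.

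The constraint is equivalent to $\sum_{\text{top-top}}\zeta=\sum_{\text{bottom-bottom}}\zeta$, so making $Re(\zeta_\alpha)<0$ while keeping all other real parts positive is possible precisely when $\alpha$ is not the only top-top (resp.\ bottom-bottom) letter, and it requires redistributing length among the \emph{other} top-top and bottom-bottom letters, not among $\beta_1,\beta_2$. This is exactly where the paper uses the regularity hypothesis. In your write-up, regularity is invoked only to ensure that $T_\alpha(\pi)$ still has a top-top and a bottom-bottom letter (true, but secondary), while the geometric feasibility of the deformation in the top-top and bottom-bottom cases --- the actual crux beyond the Abelian computation --- is left unjustified, and the explicit mechanism you propose for it fails. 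A smaller omission: when the second occurrence of $\beta_i$ lies on the same line, the regluing of the triangle is by a half-turn rather than a translation; your sketch is compatible with this, but it should be said, since it is what makes the letter types (and hence the position of the bar in the one-line representation) come out right.
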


\begin{proof}
The proof is made by doing similar surgeries as in the Abelian case. 
We assume that $\alpha$ is  of type top-bottom,  its top occurence is not the leftmost element of the (top) line, and its bottom occurence is not the rightmost of the (bottom) line. Let $\beta_1$ be the letter immediatelly on the left of $\alpha$ on the top line, and $\beta_2$ be the letter immediatelly on the right of $\alpha$ on the bottom line.
 Choose a continuous parameter $\zeta$ such that $Im(\zeta_\alpha)<0$. And change   continuously the real part of $\zeta_\alpha$ so that it becomes negative. Then, as before, we cut the triangle of sides corresponding to $\beta_1\alpha$, and glue it by along the side corresponding to the other occurence of $\beta_1$. If $\beta_1$ is on the bottom, the gluing is done by a translation, otherwise it is done by an half turn. We do a similar cutting and pasting for the triangle whose sides correspond to $\alpha\beta_2$.
 
 If  $\beta_1,\beta_2$ are both of type top-bottom, the combinatorial datum is changed in exactly the same way as before. Assume for instance that $\beta_1$ is top-top, and $\beta_2$ is top-bottom. Then, it is easy to see that $\pi$ is changed in the following way:
 
 $$
\pi=\left(\begin{array}{l}
  \dots \beta_1 \alpha \dots \beta_1 \dots \beta_2\dots  \\
  \dots  \alpha \beta_2 \dots \dots  \dots \dots \dots 
\end{array}\right) \to
\pi'=\left(\begin{array}{l}  \dots \beta_1 \dots \alpha\beta_1 \dots \alpha \beta_2\dots  \\
  \dots   \beta_2 \dots \dots  \beta_1\dots \dots \dots 
\end{array}
\right)$$
The condition on $\alpha$ and the generalized permutation $\pi'$,  when written in a one line representation, corresponds precisely to Definition~\ref{def:transfo} and $T_\alpha(\pi)$.

When $\alpha$ is top-top (resp. bottom-bottom), we can define analogous transformation.  We want to have $\zeta_\alpha$ with negative real part and all other $\zeta_i$ with positive real part. Recall that there is a linear condition $(1)$ on $\zeta$ that is needed in the definition of the surface. This condition forbid $\alpha$ to be the only top-top letter.
\end{proof}

The maps $U_t, U_b$ and $S$ are defined analogously as in the case of Abelian differentials.

\begin{definition}
A generalized permutation is \emph{mixed} if it contains top-bottom, top-top and bottom-bottom letter.
\end{definition}

\begin{lemma}
Any connected component contain a one cylinder surface defined by a mixed generalized permutation.
\end{lemma}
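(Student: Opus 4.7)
The plan is to start from any one-cylinder surface in the given connected component $\mathcal{C}$ and, if its generalized permutation $\pi$ is not already mixed, to geometrically deform it to a mixed one-cylinder representative. The existence of a one-cylinder surface in every connected component of a stratum of quadratic differentials is well known (it follows for instance from density of periodic horocycle orbits in each stratum, or from the work of Chen--M\"oller). By the standing convention, any generalized permutation has at least one top-top and one bottom-bottom letter, so the only possible obstruction to being mixed is the absence of a top-bottom letter.

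If the initial $\pi$ is already mixed we are done. Otherwise every letter of $\pi$ is either top-top or bottom-bottom, and the unique horizontal cylinder $C$ of $S=S(\pi,\zeta)$ has both of its boundary circles identified only internally; in particular there is no horizontal saddle connection of $S$ joining a singularity on the top of $C$ to one on the bottom. Since $S$ is connected, there is however a non-horizontal saddle connection $\sigma$ joining a top singularity to a bottom singularity. Applying an element of $\mathrm{SL}(2,\mathbb{R})$, which preserves $\mathcal{C}$, we can assume that $\sigma$ is arbitrarily short and very close to the vertical direction.

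The next step is to produce a new one-cylinder decomposition of this perturbed surface in which $\sigma$ labels one of the boundary saddle connections. Since the two endpoints of $\sigma$ originally lay on opposite boundary components of $C$, the corresponding letter in the new cylindrical permutation will have exactly one occurrence on the top and one on the bottom of the new cylinder, giving a top-bottom letter and hence a mixed representative. Concretely, starting from the nearly-vertical direction of $\sigma$ and running a Rauzy--Veech-type induction on the associated interval exchange, one expects to arrive at a one-cylinder decomposition containing $\sigma$ on its boundary in finitely many steps.

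The main obstacle is this last step, namely verifying that such a Rauzy--Veech procedure does indeed terminate at a one-cylinder decomposition with $\sigma$ contributing a top-bottom letter. A cleaner alternative that avoids the induction analysis is to use a two-cylinder deformation of $S$: insert a thin new cylinder $C'$ transverse to $C$ along $\sigma$ (so that $\sigma$ is the core curve of $C'$) and then shrink the height of $C$ to zero while rebalancing via $\mathrm{SL}(2,\mathbb{R})$ so as to stay in $\mathcal{C}$. The resulting surface has a single horizontal cylinder, inherited from $C'$, on whose boundary $\sigma$ now sits as a top-bottom labeled segment, producing the desired mixed representative.
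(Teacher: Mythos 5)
Your overall strategy --- pick a saddle connection $\sigma$ crossing the cylinder from a top singularity to a bottom one and then re-present the surface so that $\sigma$ becomes a top-bottom letter --- is genuinely different from the paper's, and its first steps are fine (a one-cylinder surface exists in every component, and if no letter is top-bottom then a straight segment inside the cylinder joining a singularity on the top boundary to one on the bottom boundary is such a $\sigma$). But the decisive step is missing, exactly where you flag it. Nothing guarantees that the surface is periodic in, or near, the direction of $\sigma$, let alone that it decomposes into a \emph{single} cylinder in that direction; Rauzy--Veech induction started from a nearly vertical transversal only re-encodes the same surface with respect to shorter and shorter transversals and does not ``terminate at a one-cylinder decomposition'', so this route is not merely unverified --- there is no reason for it to work as stated.

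The ``cleaner alternative'' does not repair this. A saddle connection cannot be the core curve of a cylinder (core curves are closed regular geodesics disjoint from the singularities), and the surgery you would actually have to perform --- slitting along $\sigma$ and gluing in a flat annulus --- increases the cone angles at the endpoints of $\sigma$ and changes the topology, hence moves you to a different stratum; the resulting surface is not in $\mathcal{C}$ and certifies nothing about $\mathcal{C}$. Similarly, ``shrinking the height of $C$ to zero while rebalancing via $\mathrm{SL}(2,\R)$'' is not an available move: the $\mathrm{SL}(2,\R)$-action rescales the whole surface, not one cylinder at a time, and letting the height of a cylinder go to zero makes the singularities on its two boundary circles collide, i.e.\ it is a degeneration out of the stratum. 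For comparison, the paper proceeds by reduction on the number of singularities: it shrinks a horizontal saddle connection on one boundary to merge two zeros, obtains a component with fewer singularities, and transports a mixed one-cylinder representative back by breaking up the merged zero along a path inside $\mathcal{C}$; the residual cases (a side carrying a single singularity, or a side consisting of a pair of simple poles) are treated by explicit Rauzy-induction computations on the generalized permutation, with the hyperelliptic case handled by exhibiting an explicit mixed permutation. If you want to salvage your approach, you would need an argument producing a \emph{horizontal} one-cylinder presentation in which $\sigma$ is one of the boundary saddle connections, which is essentially the hard content the paper's surgeries and Rauzy computations supply.
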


\begin{proof}
We start from a flat surface $S$ in a connected component $\mathcal{C}$ with a one cylinder decomposition (such surface always exists, by \cite{La:cc}, Theorem~3.6), and assume that it is not mixed. Each singularity appear on one of the horizontal side, but not both. Consider one horizontal side. If there are more than two singularities,  and more than one saddle connection, then there exists a saddle connection joining a singularity $P_1$, of order $k_1$ to a singularity $P_2$ of order $k_2$, and we can shrink it to obtain a singularity of order $k_1+k_2$ on a surface $S'$. We obtain a connected component with less singularities. Assume there exists a cylinder as in the lemma, then we can continuously change $S'$, until it is that cylinder. Breaking up continously the singularity of order $k_1+k_2$ along this path (see \cite{KoZo} or \cite{La:cc} for the precise definition of breaking up a singularity), we get a continuous path in $\mathcal{C}$, whose endpoint if the required cylinder.

The former procedure cannot be realized if each sides contains only one singularity or is composed of only one saddle connection (\emph{i.e.} contains exactly two simple poles). So we only need to check the result for such surfaces.

We use Rauzy induction (see \cite{BL09}, for the definition of the Rauzy moves $\mathcal{R}_t, \mathcal{R}_b$). Assume for instance that one side is a pair of poles. The generalized permutation for the cylinder decomposition is of the form
$$\begin{pmatrix}
*\ *\ * \\
a\ a
\end{pmatrix}$$
The corresponding generalized permutation for the Veech construction is 
$$\begin{pmatrix}
0\ *\ *\ *\\
a\ a\ 0
\end{pmatrix}$$
If ``$***$'' is not of the form $a_1\dots a_n a_1\dots a_n$, then using the Rauzy move $\mathcal{R}_b$, one gets
$$\begin{pmatrix}
0\dots b\dots b\dots c\dots c\\
aa0
\end{pmatrix}$$
so that there is an element $``d''$ between the two ``c'' whose other occurence is not between the two ``c''.  Then, by applying suitably the Rauzy induction, one gets

\begin{eqnarray*}
 & &\begin{pmatrix}
0\dots b\dots b\dots c\dots d \dots c\\
aa0
\end{pmatrix} \stackrel{\mathcal{R}_t}{\to} 
\begin{pmatrix}
0\dots b\dots b\dots 0c\dots d\dots  c\\
aa
\end{pmatrix} \\
&&\stackrel{\mathcal{R}_b^k}{\to} 
\begin{pmatrix}
0\dots b\dots b\dots \\
c\dots d\dots c0aa
\end{pmatrix} \stackrel{\mathcal{R}_t^2}{\to} 
\begin{pmatrix}
0\dots b\dots b\dots\\
c\dots d\dots c 0
\end{pmatrix}
\end{eqnarray*}

Then, the element $``d''$, is top-bottom. If the top line was initially $a_1\dots a_na_1\dots a_n$, then the corresponding connected component was a hyperelliptic one. In such connected component, one can find mixed generalized permutations (for the cylindrical construction) of the kind
$$\begin{pmatrix}
a_1\dots a_n \alpha b_1\dots b_m \alpha \\
\beta b_m\dots b_1 \beta a_n\dots a_1
\end{pmatrix}$$

The other cases are left to the reader.
\end{proof}

We now can state the key result, which is proved by computer experiment\footnote{The SAGE programs can be found on the webpage of the author.}.
\begin{observation}
Two mixed generalized permutations of at most 10 letter reprensenting the same connected component can be joined by a succession of moves $T_\alpha,T_\alpha^{-1}, U_t,U_b$ and $S$, up to renumbering.
\end{observation}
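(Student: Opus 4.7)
The plan is not to give a mathematical proof but to describe the computer verification, since the statement is an experimental observation. The strategy splits into three independent computational tasks: enumerating the relevant combinatorial objects, quotienting by the moves, and identifying connected components of the resulting equivalence classes.

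First I would enumerate all mixed generalized permutations on an alphabet of $d$ letters for $d \leq 10$, up to renumbering. This is done by fixing a canonical form for each renumbering class (for instance, by labeling letters in the order of first appearance in the one-line representation), iterating over all type-$(l,m)$ splittings with $l+m=2d$, and keeping only those with at least one top-top letter, one bottom-bottom letter, and one top-bottom letter. For each stored $\pi$ I would precompute the stratum $\QQ(k_1,\dots,k_r)$ it lies in from the singularity pattern read off the one-cylinder decomposition.

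Next I would implement the five moves $T_\alpha$, $T_\alpha^{-1}$, $U_t$, $U_b$, $S$ as operations on the one-line representation, always followed by reduction to the canonical renumbering form. The regularity hypothesis of Definition~\ref{def:transfo} must be checked before applying $T_\alpha$ or $T_\alpha^{-1}$. Then, working one stratum at a time, I would build a graph whose vertices are the (renumbering classes of) mixed generalized permutations in that stratum and whose edges are given by these moves, and compute the connected components of this graph with a union-find structure. This gives the equivalence classes of the observation.

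Finally I must match these equivalence classes with the known connected components of the stratum. This is where I expect the main difficulty: the classes must be labeled by invariants. For Abelian strata (appearing as squares) the relevant invariants are hyperellipticity and the Arf invariant of the spin structure from \cite{KoZo}; for generic quadratic strata one uses Lanneau's classification (hyperelliptic component detection) together with the list of exceptional strata $\QQ(-1,9)$, $\QQ(-1,3,6)$, $\QQ(-1,3,3,3)$, $\QQ(12)$, $\QQ(3,9)$, $\QQ(6,6)$, $\QQ(3,3,6)$, $\QQ(3,3,3,3)$ from Zorich and \cite{MC}. For each permutation I would compute these invariants directly from the cylindrical presentation: hyperellipticity by comparing $\pi$ against the known hyperelliptic models up to the moves $U_t,U_b,S$, and the spin parity (when applicable) via the Arf-invariant formula on the symplectic basis of cycles coming from the one-cylinder decomposition. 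For the exceptional strata, the components can be distinguished either by invariants of Chen--Möller or, more pragmatically, by checking that the number of equivalence classes found matches the known count. Verifying the observation then amounts to checking, for every stratum with $d \leq 10$, that the number of equivalence classes equals the known number of connected components, and that the computed invariants are constant on each class.
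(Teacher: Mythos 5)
Your overall strategy is the right one and is essentially what the paper does: the statement is explicitly an experimental observation, its ``proof'' is the computer computation (enumerate mixed generalized permutations up to renumbering, implement $T_\alpha, T_\alpha^{-1}, U_t, U_b, S$, compute move-equivalence classes, and compare with the known classification of connected components). Two points deserve attention, one of which is a genuine gap in your last step. For the exceptional strata there is no readily computable per-permutation invariant --- that is precisely why they are exceptional: their non-connectedness was established via Zorich's extended Rauzy class computations and later by the algebro-geometric techniques of Chen--M\"oller --- so, as you say, you must fall back on comparing the \emph{number} of move-classes with the known number of components. But that count comparison alone does not prove the observation: since the moves preserve the connected component, each class sits inside a single component, and equality of counts forces a bijection only if you also know that \emph{every} connected component contains at least one mixed one-cylinder generalized permutation. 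Otherwise two classes could lie in the same component while another component is simply unrepresented, and the counts would still agree. This surjectivity is exactly the content of the Lemma that the paper proves immediately before the observation (using shrinking/breaking up of singularities and the Rauzy induction), and your proposal never invokes it or anything equivalent (such as exhibiting, in distinct classes, explicit representatives known to lie in distinct components). You need this ingredient to make the verification conclusive.

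A minor remark: the discussion of spin parity and the Arf invariant for ``Abelian strata appearing as squares'' is moot here, since a mixed generalized permutation has by definition a top-top and a bottom-bottom letter, so the cylindrical construction never produces the square of an Abelian differential; for this observation the only relevant invariants are hyperellipticity (Lanneau's classification) and the exceptional-strata bookkeeping discussed above.
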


\begin{remark}
In particular, we ``see'' all the connected components of the exceptionnal strata  $\mathcal{Q}(-1,9), \mathcal{Q}(-1,3,6)$, $\mathcal{Q}(-1,3,3,3)$, $\mathcal{Q}(12)$, $\mathcal{Q}(3,9)$, $\mathcal{Q}(3,3,6)$, $\mathcal{Q}(6,6)$ and $\mathcal{Q}(3,3,3,3)$.
\end{remark}

\begin{remark}
In fact, the operation $S$ is not necessary, except the for strata $\mathcal{Q}(-1,5)$, $\mathcal{Q}(-1,1,4)$ and $\mathcal{Q}(-1,9)$, for which avoiding $S$ leads to one extra class.
\end{remark}

\nocite*
\bibliographystyle{plain}
\bibliography{biblio}

\begin{thebibliography}{1}

\bibitem{BL09}
C.~Boissy and E.~Lanneau.
\newblock Dynamics and geometry of the rauzy-veech induction for quadratic
  differentials.
\newblock {\em Ergodic Theory and Dynamical Systems}, 29:767--816, 2009.

\bibitem{KoZo}
M.~Kontsevich and A.~Zorich.
\newblock {Connected components of the moduli spaces of Abelian differentials
  with prescribed singularities}.
\newblock {\em Invent. Math.}, 153(3):631--678, 2003.

\bibitem{La:cc}
Erwan Lanneau.
\newblock Connected components of the strata of the moduli spaces of quadratic
  differentials.
\newblock {\em Ann. Sci. \'Ec. Norm. Sup\'er. (4)}, 41(1):1--56, 2008.

\bibitem{MC}
M.~Möller and D.~Chen.
\newblock {Quadratic differentials in low genus: exceptional and non-varying}.
\newblock arXiv:1204.1707, Ann. Sci. Éc. Norm. Supér. (to appear).

\bibitem{Veech82}
W.~Veech.
\newblock {Gauss measures for transformations on the space of interval exchange
  maps}.
\newblock {\em Ann. of Math. (2)}, 115(1):201--242, 1982.

\bibitem{Veech90}
W.~Veech.
\newblock Moduli spaces of quadratic differentials.
\newblock {\em J.~Analyse Math.}, 55:117--170, 1990.

\end{thebibliography}

\end{document}